\numberwithin{equation}{section}
\newtheorem{theorem}{Theorem}[section]
\newtheorem{proposition}[theorem]{Proposition}
\newtheorem{lemma}[theorem]{Lemma}
\newtheorem{corollary}[theorem]{Corollary}
\theoremstyle{definition}
\newtheorem{definition}[theorem]{Definition}
\newtheorem{question}[theorem]{Question}
\newtheorem{example}[theorem]{Example}
\newcommand{\cO}{\mathcal{O}}
\newcommand{\Char}{char}
\newcommand{\Proj}{\mathbb{P}}
\DeclareMathOperator*{\Div}{Div}
\DeclareMathOperator*{\Image}{Image}
\DeclareMathOperator*{\Sym}{Sym}
\begin{document}

\title[Virtual global generation in higher dimensions]{Virtual global
generation in higher dimensions}

\author[I. Biswas]{Indranil Biswas}

\address{Department of Mathematics, Shiv Nadar University, NH91, Tehsil
Dadri, Greater Noida, Uttar Pradesh 201314, India}

\email{indranil.biswas@snu.edu.in, indranil29@gmail.com}

\author[M. Kumar]{Manish Kumar}

\address{Statistics and Mathematics Unit, Indian Statistical Institute,
Bangalore 560059, India}

\email{manish@isibang.ac.in}

\author[A. J. Parameswaran]{A. J. Parameswaran}

\address{Kerala School of Mathematics, Kunnamangalam PO, Kozhikode, Kerala, 673571, India}

\email{param@ksom.res.in}

\subjclass[2010]{14H30, 14H60}

\keywords{Virtual global generation; vector bundles}

\begin{abstract}
 The notion of virtual global generation (VGG) for a vector bundle has multiple possible generalization from the case of curves to higher dimensional normal projective varieties. We study relationship between these notions. All these notions agree for curves but in higher dimension we show that this is not the case.
\end{abstract}

\maketitle

\section{Introduction}

Let $E$ be a vector bundle on an irreducible smooth projective curve $X$ defined over an algebraically 
closed field $k$.
It is called \textit{virtually globally generated} (VGG) if there is a nonconstant morphism
$$
f\, :\,Y\, \longrightarrow\, X
$$
from an irreducible smooth projective curve $Y$ such that $f^*E$ is generated by its global sections. It
was shown in \cite[Theorem 1.1]{BP2} that $E$ is VGG if and only if $E$ (respectively, $F^{n*}E$ for some 
$n\,\ge\, 1$) is a sum of a finite vector bundle and ample vector bundle if $\Char(k)\,=\,0$ (respectively,
$\Char(k)\,=\,p>0$, where $F\,:\,X\,\longrightarrow\, X$ is the absolute Frobenius morphism).

When $X$ is a normal irreducible projective variety of arbitrary dimension it is not difficult to see that a 
vector bundle $E$ on $X$ is globally generated if and only if the tautological line bundle 
$\cO_{\Proj(E)}(1)$ on the projective bundle $\Proj(E)$ over $X$ is globally generated. Moreover, this is 
also equivalent to the statement that $i^*E$ is globally generated for every closed immersion $i\,:\,C\,
\longrightarrow\, X$, where $C$ is a smooth projective curve (see Lemma \ref{GG}).

Inspired by this we give several definition of VGG (see Definition \ref{defVGG}). We show that $E$ is 
strongly VGG (globally generated after the pullback by a finite dominant map) implies $E$ is VGG (i.e., 
$\cO_{\Proj(E)}(N)$ is globally generated for some $N$). And we also show that $E$ is VGG implies $E$ is 
curve VGG (see Theorem \ref{main}). If $E$ is a direct sum of line bundles it is also shown that if $E$ is VGG 
then $E$ is strongly VGG (see Proposition \ref{p1}). But we do not know if this is true or false in 
general.

In the last section we provide examples to show that there are curve VGG bundles which are not VGG. Note that a 
vector bundle is swept by curves if it is curve VGG (follows from Lemma \ref{lem0}). We give an example to 
show that the converse is false.

\section{$m$--VGG vector bundles}

Let $k$ be an algebraically closed field.
Take an irreducible normal projective variety $X$ over $k$ of dimension $n\,\ge\, 1$, and let $E$
be a vector bundle over $X$.

\begin{definition}\label{defVGG}\mbox{}
\begin{enumerate}
\item For an integer $m\,\ge\, 1$, we will say that $E$ is $m$--virtually globally generated ($m$--VGG for short) if
for any irreducible reduced closed subscheme $Z$ of $X$ of dimension $m$, there exists a finite morphism
$f\,:\,Y\,\longrightarrow\, X$ with $\Image(f)\,=\,Z$, where $Y$ is an irreducible normal projective variety
of dimension $m$, such that the pullback $f^*E$ is generated by its global sections.

\item The vector bundle $E$ is called strongly VGG if it is $n$--VGG, where $n$, as before, is the dimension of $X$.

\item The vector bundle $E$ is called \textit{curve VGG} if it is $1$--VGG.

\item The vector bundle $E$ is called VGG if there is an integer $N\,\ge\, 1$ such that the line bundle
${\cO}_{{\mathbb P}(E)}(N)\,:=\, {\cO}_{{\mathbb P}(E)}(1)^{\otimes N}
\, \longrightarrow \, {\mathbb P}(E)$ is generated by its global sections.

\item The vector bundle $E$ is said to be \emph{swept by curves} if for every closed point $v\,\in\, E$, 
there exists a nonconstant morphism $f\,:\,Y\,\longrightarrow\, E$ from an
irreducible smooth projective curve $Y$ such that $v\,\in\, f(Y)$.
\end{enumerate}
\end{definition}

\begin{lemma}\label{lem0}
A vector bundle $E$ is swept by curves if and only if for every closed point $x\,\in\, X$, 
there exists a finite morphism $f\,:\,Y\,\longrightarrow\, X$ from a smooth projective curve $Y$, such
that $x$ is in the image of $f$ and the vector bundle $f^*E$ is generated by its global sections.
\end{lemma}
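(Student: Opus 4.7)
The implication $(\Leftarrow)$ is a direct unwinding of definitions. Given $v\in E$ with $x:=\pi(v)$, where $\pi\colon E\to X$ is the bundle projection, the hypothesis at $x$ produces a finite morphism $f\colon Y\to X$ from a smooth projective curve with $x\in f(Y)$ and $f^*E$ globally generated. Picking $y\in f^{-1}(x)$, the base-change isomorphism identifies $(f^*E)_y$ with $E_x$, and the surjectivity of the evaluation $H^0(Y,f^*E)\to (f^*E)_y$ yields a section $s$ with $s(y)=v$. The section $s$ is the same datum as a morphism $Y\to f^*E\to E$ passing through $v$; this morphism is nonconstant because its composition with $\pi$ is the finite morphism $f$.

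For $(\Rightarrow)$, fix $x\in X$ and a basis $v_1,\ldots,v_r$ of $E_x$. Applying the sweeping hypothesis at each $v_i$ yields a nonconstant morphism $g_i\colon Y_i\to E$ from an irreducible smooth projective curve $Y_i$ with $v_i\in g_i(Y_i)$. Since a projective curve admits no nonconstant morphism into the affine fiber $E_x\cong\mathbb{A}^r$, the composition $f_i:=\pi\circ g_i\colon Y_i\to X$ is finite onto its image $C_i\subset X$, an irreducible curve through $x$; equivalently, $g_i$ is the datum of a section $\sigma_i\in H^0(Y_i,f_i^*E)$ satisfying $\sigma_i(y_i)=v_i$ at some $y_i\in f_i^{-1}(x)$.

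The central task is to combine these $r$ pieces of data into a single finite morphism $f\colon Y\to X$ from an irreducible smooth projective curve with $x\in f(Y)$ on which $f^*E$ is globally generated. The difficulty is that the images $C_i$ are typically distinct, so the fiber product $Y_1\times_X\cdots\times_X Y_r$ collapses to a zero-dimensional scheme once $\dim X\geq 2$, and the disjoint union $\bigsqcup_i Y_i\to X$ fails global generation at each $y_i$ because only one of the $\sigma_j$'s is nonzero there. My plan is to identify the marked points $y_1,\ldots,y_r$ so as to build a connected nodal curve mapping to $X$, then smooth it in a one-parameter family and argue that the $\sigma_i$ extend flatly to global sections of $f^*E$ on the generic fiber $Y$ whose values at a preimage of $x$ span $E_x$.

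The principal obstacle is upgrading global generation at a single preimage of $x$ to global generation at \emph{every} point of $Y$. This step requires iterating the construction: at any point $y\in Y$ where generation fails, apply the sweeping hypothesis at a missing vector of $E_{f(y)}$ to produce an additional section on a further deformation of $Y$, and use upper semicontinuity of the base locus to conclude after finitely many steps. Verifying that this iterative procedure terminates and yields the desired curve is where the technical content of the lemma is concentrated.
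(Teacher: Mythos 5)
Your ``$\Leftarrow$'' direction is fine and is essentially the paper's argument. For ``$\Rightarrow$'', your set-up (a basis $v_1,\dots,v_r$ of $E_x$, finite maps $f_i\colon Y_i\to X$ with sections $\sigma_i\in H^0(Y_i,f_i^*E)$, $\sigma_i(y_i)=v_i$) coincides with the paper's, but from that point on you replace the construction by a plan whose central steps either fail or are left undone. Concretely: (a) gluing the marked points into a node and ``smoothing in a one-parameter family'' is not available over an arbitrary normal projective $X$ --- deformations of a map from a nodal curve to $X$ are obstructed in general (the image curves $C_i$ may be rigid, e.g.\ configurations of negative curves on a surface, so no irreducible curve near the nodal configuration exists), and even when a smoothing of the map exists the nearby fibers need not pass through $x$, since passing through a prescribed point is a positive-codimension condition that is typically lost when the node is smoothed; (b) the $\sigma_i$ do not even define sections of the pullback of $E$ on the nodal curve, because at an identified point a section must take a single value while the $\sigma_i(y_i)=v_i$ are distinct, and the claim that sections ``extend flatly to the generic fiber'' runs against semicontinuity: $h^0$ can drop off the special fiber, so sections on the nodal fiber need not propagate to nearby smooth fibers; (c) the final step --- upgrading generation at one point over $x$ to generation everywhere by iterating and invoking ``semicontinuity of the base locus'' --- is precisely what you acknowledge to be the technical content, and it is not carried out, nor is termination addressed. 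As it stands this is a program, not a proof.

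For comparison, the paper never leaves the category of fiber products over $X$: it takes the component $Y'$ of $Y_1\times_X\cdots\times_X Y_r$ through $y'=(y_1,\dots,y_r)$, desingularizes it, pulls back the $\sigma_i$ to get surjectivity of the evaluation map over $x$, observes that the locus where evaluation fails is therefore finite with image $\{z_1,\dots,z_b\}\subset X$, repeats the same construction at each $z_j$ to get curves $M_j$, and takes one more fiber product; no deformation theory enters. That said, the objection you raise against fiber products is a genuine subtlety: if the image curves $f_i(Y_i)$ are pairwise distinct (the typical situation when $\dim X\geq 2$), then $Y_1\times_X\cdots\times_X Y_r$ is supported over the finite set $\bigcap_i f_i(Y_i)$ and the component through $y'$ is zero-dimensional, a point the paper's write-up does not address (its argument as stated produces a curve only when the images can be taken to coincide). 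So your diagnosis of where the difficulty lies is sound, but your proposed detour through nodal curves and smoothings trades that difficulty for harder, and in general unsolvable, ones rather than resolving it; a completed proof would have to either justify reducing to curves with a common image or supply the missing smoothing, extension and termination arguments, none of which is done here.
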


\begin{proof}
First assume that for every closed point $x\,\in\, X$, 
there exists a finite morphism $f\,:\,Y\,\longrightarrow\, X$ from a smooth projective curve $Y$, such
that $x$ is in the image of $f$ and the vector bundle $f^*E$ is generated by its global sections.
Take a point $x\, \in\, X$ and any $v\, \in\, E_x$. Take $f$ as above such
that $x$ is in the image of $f$ and the vector bundle $f^*E$ is generated by its global sections.
Fix a point $y\, \in\, Y$ for which $f(y)\,=\, x$, and also take the unique $w\, \in\, (f^*E)_y$ satisfying the
condition that the natural map
\begin{equation}\label{epsi}
\psi \ :\ f^*E\ \longrightarrow\ E
\end{equation}
sends $w$ to $v$. Choose a section
$$
s\ : \ Y \ \longrightarrow\ f^*E
$$
such that $s(y) \,=\, w$; recall that $f^*E$ is generated by its global sections. Consider the map
$$
\widehat{s} \ :=\ \psi \circ s \ :\ Y\ \longrightarrow\ E,
$$
where $\psi$ is the map in \eqref{epsi}. We have $\widehat{s}(y)\,=\, v$. Consequently,
$E$ is swept by curves.

To prove the converse, assume that $E$ is swept by curves. Take a point $x\, \in\, X$. Fix a basis
$\{v_1, \, \cdots,\, v_r\}$ of the fiber $E_x$, where $r\,=\, {\rm rank}(E)$. For $1\, \leq\, i\, \leq\, r$, let
$$
\psi_i\ :\ Y_i\ \longrightarrow\ E
$$
be a map from a smooth projective curve $Y_i$ such that the composition of maps
$$Y_i\, \xrightarrow{\,\,\, \psi_i\,\,\,}\, E \, \longrightarrow\, X$$
is finite and $\psi_i(y_i)\,=\, v_i$ for some $y_i\in Y_i$.  Consider the component $Y'$ of the fiber product
$$
Y_1\times_X Y_2\times_X\cdots \times_X Y_r
$$
that contains the point $y'=(y_1,\, y_2,\, \cdots,\, y_r)$. Denote by
$Y$ the desingularization of $Y'$, and let $\phi\, :\, Y\, \longrightarrow\, X$ be the natural
projection. For every point $y\,\in\, Y$ that maps to $y'$, it follows --- from the construction of
$Y'$ --- that the evaluation map
$$
H^0(Y,\, \phi^*E)\ \longrightarrow\ E_x,\ \ \, s\ \longrightarrow\ s(y),
$$
is surjective. So there is a finite subset $\{z_1,\, \cdots,\, z_b\}\, \subset\, X$ (which may be empty;
in other words, $b\,=\, 0$), such that the evaluation map
$$
H^0(Y,\, \phi^*E)\ \longrightarrow\ E_z,\ \ \, s\ \longrightarrow\ s(z),
$$
is surjective for all $z\, \in\, Y \setminus \phi^{-1}(\{z_1,\, \cdots,\, z_b\})$.

For each $1\, \leq\, j\, \leq\, b$, let
$$
\gamma_j\ :\ M_j\ \longrightarrow\ E
$$
be a map from a smooth projective curve $M_j$ such that
\begin{itemize}
\item the composition of maps
$$M_j\, \xrightarrow{\,\,\, \gamma_j\,\,\,}\, E \, \longrightarrow\, X$$
is finite, and

\item for all $m\, \in\, M_j$ lying above $z_j$, the evaluation map
$$
H^0(M_j,\, \gamma_j^*E)\ \longrightarrow\ E_{z_j},\ \ \, s\ \longrightarrow\ s(m),
$$
is surjective.
\end{itemize}
The curve $M_j$ can be constructed exactly as $Y$ is done after substituting $z_j$ in place
of $x$. Now consider a component $Z'$ of the fiber product
$Y\times_X M_1\times_X \cdots \times_X M_b$.
Let $Z$ be a desingularization of $Z'$. Let $f\, :\, Z\, \longrightarrow\, X$ be the natural projection.
It is evident that $f^*E$ is generated by its global sections.
\end{proof}

\begin{lemma}\label{GG}
Let $E$ be a vector bundle over an irreducible normal projective variety $X$ over $k$ of dimension
$n\,\ge \,1$. The following statements are equivalent:
\begin{enumerate}
\item The vector bundle $E$ is globally generated.

\item The line bundle ${\cO}_{{\mathbb P}(E)}(1)$ on ${\mathbb P}(E)$ is globally generated.

\item For any closed immersion from a curve $i\,:\,C \,\longrightarrow\, X$, the pulled
back vector bundle $i^*E$ is globally generated.
\end{enumerate}
\end{lemma}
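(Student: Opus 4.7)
The plan is to establish the cyclic implications $(1)\Rightarrow(2)\Rightarrow(1)$ and $(1)\Rightarrow(3)\Rightarrow(1)$; three of these are essentially formal, and the entire content lies in $(3)\Rightarrow(1)$.

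For $(1)\Rightarrow(2)$ I would pull back a surjection $\cO_X^{\oplus N}\twoheadrightarrow E$ along $\pi:\Proj(E)\to X$ and compose with the tautological quotient $\pi^*E\twoheadrightarrow \cO_{\Proj(E)}(1)$ to produce $\cO_{\Proj(E)}^{\oplus N}\twoheadrightarrow \cO_{\Proj(E)}(1)$. For $(2)\Rightarrow(1)$ I would use $\pi_*\cO_{\Proj(E)}(1)=E$ to identify $H^0(\Proj(E),\cO_{\Proj(E)}(1))$ with $H^0(X,E)$; a closed point $p\in\Proj(E)$ over $x\in X$ corresponds to a quotient $q:E_x\twoheadrightarrow k$, and evaluation of $s\in H^0(X,E)$ at $p$ equals $q(s(x))$, so global generation of $\cO_{\Proj(E)}(1)$ says that the image of $H^0(X,E)\to E_x$ is not contained in any hyperplane, hence equals $E_x$. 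Finally $(1)\Rightarrow(3)$ is immediate since pullback preserves global generation.

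The substantive step is $(3)\Rightarrow(1)$. Fix a closed point $x\in X$; I will produce a one--dimensional closed subscheme $i:C\hookrightarrow X$ passing through $x$ such that the restriction map $H^0(X,E)\to H^0(C,i^*E)$ is surjective. Granting this, hypothesis $(3)$ forces $H^0(C,i^*E)\to (i^*E)_x=E_x$ to be surjective, whence so is $H^0(X,E)\to E_x$; as $x$ was arbitrary, $E$ is globally generated.

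To build $C$ I would fix a very ample line bundle $L$ on $X$, and for $m\gg 0$ choose hypersurfaces $H_1,\ldots,H_{n-1}\in |L^m|$ through $x$ in sufficiently general position, setting $C=H_1\cap\cdots\cap H_{n-1}$, which has pure dimension one by Krull's height theorem. Cutting down one hypersurface at a time via the short exact sequences $0\to F\otimes L^{-m}\to F\to F|_{H_i}\to 0$ on the successive intersections, the surjectivity of $H^0(X,E)\to H^0(C,i^*E)$ reduces to the vanishing of $H^1$ of $E$ and of its iterated hyperplane restrictions twisted by $L^{-m}$. These vanishings are supplied, for $m$ large, by the Enriques--Severi--Zariski lemma on a normal projective variety. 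The main obstacle is precisely arranging this chain of cohomology vanishings together with the Bertini--type genericity needed to keep each successive intersection normal enough for the next round of ESZ to apply.
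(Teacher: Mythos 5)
Your proposal is correct and follows essentially the same route as the paper: the implications $(1)\Leftrightarrow(2)$ and $(1)\Rightarrow(3)$ are handled by the same formal arguments (tautological quotient / hyperplane base-point argument via $H^0(\mathbb{P}(E),\mathcal{O}_{\mathbb{P}(E)}(1))=H^0(X,E)$), and for $(3)\Rightarrow(1)$ the paper likewise cuts down to a high multi-degree complete intersection curve through the given point, using the vanishing $H^i(M,V\otimes (L^*)^{\otimes m})=0$ for $i=0,1$, $m\gg 0$ (which it labels Serre vanishing, i.e.\ your Enriques--Severi--Zariski input) inductively to get $H^0(X,E)\xrightarrow{\ \sim\ }H^0(C,E|_C)$. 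The one subtlety you flag --- keeping the successive hypersurface sections through $x$ normal (or otherwise justifying the vanishing on them) so the induction can continue --- is also left implicit in the paper's ``using this inductively'' step.
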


\begin{proof}
Since $H^0({\mathbb P}(E),\, {\cO}_{{\mathbb P}(E)}(1))\,=\, H^0(X,\, E)$, we conclude that the statement
(1) implies the statement (2). To prove that the statement (2) implies the statement (1), take any
point $x\, \in\, X$, and consider the evaluation map
$$
\phi_x\ :\ H^0(X,\, E) \ \longrightarrow\ E_x,\ \ \, s \ \longmapsto\ s(x).
$$
If $\phi_x(H^0(X,\, E))$ is contained in a hyperplane ${\mathcal H}\, \subset\, E_x$, then consider the
point $y\, \in\, {\mathbb P}(E)_x$ corresponding to the hyperplane ${\mathcal H}$. Since
$\phi_x(H^0(X,\, E))\, \subset\, {\mathcal H}$, the evaluation map
$$
H^0({\mathbb P}(E),\, {\cO}_{{\mathbb P}(E)}(1)) \ \longrightarrow\ {\cO}_{{\mathbb P}(E)}(1)_y,
\ \ \, s \ \longmapsto\ s(y)
$$
is the zero homomorphism. In that case, ${\cO}_{{\mathbb P}(E)}(1)$ on ${\mathbb P}(E)$ is not
globally generated. Hence the statement (2) implies the statement (1).

Statement (1) evidently implies statement (3). To prove that the statement (3) implies the statement (1),
note that we can assume that $n\, \geq\, 2$, because (3) implies (1) if $X$ is a curve.
We recall Serre vanishing theorem. Let $V$ be a vector bundle on a normal projective variety $M$ over $k$,
with $\dim M \, >\, 1$, and take a very ample line bundle $L$ on $M$. Then there is a positive integer $m_0$
such that for $i\,=\, 0,\, 1$,
\begin{equation}\label{e1}
H^i(M,\, V\otimes (L^*)^{\otimes m}) \ =\ 0
\end{equation}
for all $m\, \geq\, m_0$. For any $D\, \in\, \big\vert L^{\otimes m}\big\vert$, consider the short exact
sequence
$$
0\, \longrightarrow\,V\otimes {\mathcal O}_M(-D)\,=\, V\otimes (L^*)^{\otimes m}
\, \longrightarrow\, V \, \longrightarrow\, V\big\vert_D \, \longrightarrow\, 0.
$$
It produces the following exact sequence of cohomologies
\begin{equation}\label{e2}
H^0(M,\, V\otimes (L^*)^{\otimes m}) \, \longrightarrow\,H^0(M,\, V) \,
\stackrel{\varphi}{\longrightarrow}\, H^0(D,\, V\big\vert_D)
\end{equation}
$$
\ \longrightarrow\, H^1(M,\, V\otimes (L^*)^{\otimes m}).
$$
Now from \eqref{e1} it follows that the homomorphism $\varphi$ in \eqref{e2} is an
isomorphism for all $m\, \geq\, m_0$.

Using this inductively we conclude that $H^0(M,\, V) \, =\, H^0(C,\, V\big\vert_C)$, where $C$ is a complete
intersection curve in $M$ of sufficiently large multi-degree. On the other hand, given a point of $M$, there 
are complete intersection curves of sufficiently large multi-degrees passing through it. Combining these it 
follows that the statement (3) implies the statement (1).
\end{proof}

The following theorem will be proved in Section \ref{sec3}.

\begin{theorem}\label{main}
Take an irreducible normal projective variety $X$, and let $E$ be a vector bundle over $X$. The following
two statements hold:
 \begin{enumerate}
 \item If $E$ is strongly VGG then $E$ is VGG.

 \item If $E$ is VGG then $E$ is curve VGG.
 \end{enumerate}
\end{theorem}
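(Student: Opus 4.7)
Let $f: Y \to X$ be the finite morphism witnessing strong VGG, so $Y$ is irreducible normal projective of dimension $n = \dim X$ and $f^*E$ is globally generated; necessarily $f$ is surjective. The plan is to pass to projective bundles and descend semiampleness. Let
$$
\widetilde{f}: \Proj(f^*E) \longrightarrow \Proj(E)
$$
be the induced morphism; it is finite and surjective, and $\widetilde{f}^*\cO_{\Proj(E)}(1) = \cO_{\Proj(f^*E)}(1)$. Since $f^*E$ is globally generated on the normal variety $Y$, Lemma \ref{GG} yields global generation of $\cO_{\Proj(f^*E)}(1)$. What remains is the general descent statement: if $\pi: Z \to W$ is a finite surjective morphism of projective varieties with $W$ normal and $\pi^*L$ is globally generated for a line bundle $L$ on $W$, then $L^{\otimes \deg\pi}$ is globally generated. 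I would prove this via a norm-map argument. For $w \in W$, the fiber $\pi^{-1}(w) = \{z_1,\ldots,z_t\}$ is finite, and global generation furnishes a section of $\pi^*L$ nonvanishing at each $z_i$; a general $k$-linear combination of these sections (using that $k$ is infinite) produces a single section $s$ nonvanishing on the entire fiber. The norm $\mathrm{Nm}_\pi(s) \in H^0(W, L^{\otimes \deg\pi})$ then has value at $w$ equal, up to a unit, to the product $\prod_i s(z_i)$, hence is nonzero at $w$. Applying this to $\pi = \widetilde f$ and $L = \cO_{\Proj(E)}(1)$, noting that $\Proj(E)$ is normal because $X$ is, establishes that $\cO_{\Proj(E)}(\deg\widetilde f)$ is globally generated, so $E$ is VGG.

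\textbf{Part (2).} Let $Z \subset X$ be an irreducible reduced closed subscheme of dimension one, and let $\nu: \widetilde Z \to Z \hookrightarrow X$ be the composition of the normalization of $Z$ with the inclusion; so $\nu$ is a finite morphism from a smooth projective curve with image $Z$. Set $F := \nu^*E$. The induced finite morphism $\widetilde\nu: \Proj(F) \to \Proj(E)$ pulls $\cO_{\Proj(E)}(N)$ back to $\cO_{\Proj(F)}(N)$, which is therefore globally generated. It suffices to produce a finite morphism $h: Y \to \widetilde Z$ from a smooth projective curve with $h^*F$ globally generated; then the composition $g := \nu \circ h$ will be finite with image $Z$, and $g^*E = h^*F$ will be globally generated, witnessing curve VGG. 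This reduces the theorem to the curve-case equivalence asserted in the introduction: on a smooth projective curve, global generation of $\cO_{\Proj(F)}(N)$ for some $N$ is equivalent to classical curve VGG for $F$. The direction needed here can be extracted from \cite[Theorem 1.1]{BP2}, which characterizes classical curve VGG by a direct sum decomposition $F \cong F_{\mathrm{fin}} \oplus F_{\mathrm{amp}}$ (after Frobenius pullback in positive characteristic); each summand is classical VGG on a curve, so their sum is as well.

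\textbf{Main obstacle.} The decisive technical step is the curve-case implication invoked in part (2): that a vector bundle on a smooth projective curve with $\cO_{\Proj(F)}(N)$ globally generated is classical VGG. The converse implication follows immediately from the descent of semiampleness in part (1), but the direction actually required here rests on the structural theorem \cite[Theorem 1.1]{BP2} and, in positive characteristic, additional bookkeeping with Frobenius pullbacks to handle the finite summand. The descent of semiampleness itself (used in part (1)) is the secondary non-trivial ingredient; while standard, it is what carries the global generation from $Y$ back to $X$ and is the reason the implication in part (1) does not yield global generation of $\cO_{\Proj(E)}(1)$ itself but only of some tensor power.
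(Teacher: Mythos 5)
Your argument is correct in outline and its first move coincides with the paper's: pass to the finite surjective morphism $\Proj(f^*E)\longrightarrow \Proj(E)$ and observe that $\cO_{\Proj(f^*E)}(1)$ is globally generated. Where you diverge is in descending semiampleness: the paper invokes its Theorem \ref{main-lb}, whose proof first splits off a Frobenius factor in characteristic $p$, passes to a Galois closure, and then produces a $G$--invariant section by symmetrizing (with exponent $|G|!$), while you use the norm map for the finite surjective morphism directly, with exponent the degree. This is a legitimate and somewhat shorter alternative, but one step needs to be said correctly: the induced map on projective bundles need not be flat, so your claim that $\mathrm{Nm}(s)$ has value at $w$ equal, up to a unit, to $\prod_i s(z_i)$ is not literally available. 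What you need, and what is true, is only the implication ``$s$ nowhere zero on the fiber over $w$ $\Rightarrow$ $\mathrm{Nm}(s)(w)\neq 0$'': then $s$ is a unit in the semilocal ring of the fiber, its inverse is integral over $\cO_{\Proj(E),w}$, hence $\mathrm{Nm}(s^{-1})$ is regular at $w$ by normality of $\Proj(E)$ (here normality of $X$ is used), and $\mathrm{Nm}(s)\mathrm{Nm}(s^{-1})=1$ shows $\mathrm{Nm}(s)$ is a unit at $w$. With that repair, part (1) is a genuine alternative to the paper's Galois/Frobenius argument, trading the reduction to Galois covers for the integrality properties of the norm.

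\textbf{Part (2).} Here there is a genuine gap. Your reduction to the normalization $\nu\colon \widetilde Z\to X$ of a one-dimensional $Z$ agrees with the paper, but the decisive implication --- that on a smooth projective curve, global generation of $\cO_{\Proj(F)}(N)$ for some $N$ forces $F$ to be classically VGG --- is asserted rather than proved. The justification you offer (``each summand is classical VGG on a curve, so their sum is as well'') establishes the \emph{converse} direction of the BP2 characterization, namely that a sum of a finite and an ample bundle is VGG; it does not produce, from the semiampleness hypothesis on $\cO_{\Proj(F)}(1)$, any property of $F$ to which \cite[Theorem 1.1]{BP2} can be applied. The paper supplies precisely this bridge: using $H^0(P_C,\cO_{P_C}(N))=H^0\bigl(C,\Sym^N(g^*E)\bigr)$, it deduces from global generation of $\cO_{P_C}(N)$ that $\Sym^N(g^*E)$ is globally generated, hence $\mu_{\min}\bigl(\Sym^N(g^*E)\bigr)\geq 0$ and so $\mu_{\min}(g^*E)\geq 0$, and only then invokes the curve criterion of \cite{BP2}. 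Some argument of this kind, extracting a positivity or decomposition statement about $F$ itself from the hypothesis on $\cO_{\Proj(F)}(N)$ (including the Frobenius bookkeeping you mention in positive characteristic), is exactly the nontrivial content of statement (2); leaving it at ``can be extracted from \cite[Theorem 1.1]{BP2}'' leaves the proof of (2) incomplete.
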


As a consequence of Theorem \ref{main} we obtain the following.

\begin{corollary}
Let $X$ be an irreducible smooth projective curve, and take a vector bundle $E$ over
$X$. The following four statements are equivalent.
 \begin{enumerate}
 \item $E$ is strongly VGG.
 \item $E$ is VGG.
 \item $E$ is curve VGG.
 \item $E$ is swept by curves.
 \end{enumerate}
\end{corollary}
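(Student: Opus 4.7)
The plan is to exploit the fact that for a curve, several of these notions collapse by definition, and then fill in the remaining implications using Theorem \ref{main} and Lemma \ref{lem0}.

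First, I would observe that since $\dim X\,=\,1$, the definition of strongly VGG (which is $n$--VGG with $n\,=\,\dim X$) coincides verbatim with the definition of curve VGG (which is $1$--VGG). Hence (1) and (3) are equivalent immediately from Definition \ref{defVGG}. Combining this with Theorem \ref{main} — which gives (1) $\Rightarrow$ (2) and (2) $\Rightarrow$ (3) — one obtains the equivalence of (1), (2), and (3) in a single line.

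Next I would prove the equivalence (3) $\Leftrightarrow$ (4) via Lemma \ref{lem0}. For (3) $\Rightarrow$ (4): if $E$ is curve VGG on the curve $X$, then applying the definition with $Z\,=\,X$ (the only irreducible reduced closed subscheme of dimension $1$) yields a single finite surjective morphism $f\,:\,Y\,\longrightarrow\, X$ from a smooth projective curve such that $f^*E$ is globally generated. Every point of $X$ lies in the image of $f$, so Lemma \ref{lem0} gives that $E$ is swept by curves. For (4) $\Rightarrow$ (3): Lemma \ref{lem0} supplies, for any chosen $x\,\in\, X$, a finite morphism $f\,:\,Y\,\longrightarrow\, X$ from a smooth projective curve with $x$ in its image such that $f^*E$ is globally generated. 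Because $Y$ is an irreducible projective curve and $f$ is finite (so fibers are $0$-dimensional, in particular $f$ is non-constant), the image $f(Y)$ is a closed irreducible $1$-dimensional subset of the irreducible curve $X$, hence equal to $X$. This single $f$ therefore witnesses $1$--VGG, i.e.\ curve VGG.

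There is no real obstacle here; the argument is essentially bookkeeping. The only point that warrants a moment of care is verifying that ``finite morphism'' between irreducible projective curves is automatically surjective, which I would handle by the dimension-of-fibers remark above so that one does not need to construct a fresh cover of $X$ in step (4) $\Rightarrow$ (3) — the single curve produced by Lemma \ref{lem0} at any one point already dominates $X$.
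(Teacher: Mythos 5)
Your proposal is correct and follows essentially the same route as the paper: (1)$\Leftrightarrow$(3) by definition since $\dim X=1$, (1)$\Rightarrow$(2) and (2)$\Rightarrow$(3) from Theorem \ref{main}, and (3)$\Leftrightarrow$(4) via Lemma \ref{lem0}. The extra detail you supply for (3)$\Leftrightarrow$(4) (surjectivity of a finite map onto the irreducible curve $X$, and that a normal projective curve is smooth) is just a fleshed-out version of what the paper leaves implicit.
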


\begin{proof}
Since $\dim X\,=\, 1$, statements (1) and (3) are equivalent by definition.

{}From Lemma \ref{lem0} it follows that the statements (3) and (4) are equivalent.

{}From Theorem \ref{main}(1) it follows that the statement (1) implies the statement (2).

{}From Theorem \ref{main}(2) it follows that the statement (2) implies the statement (3).
\end{proof}

\section{The case of line bundles}

\begin{theorem}\label{main-lb}
Let $X$ be an irreducible normal projective variety of dimension $n$, and let $L$ be a line bundle over $X$.
Then $L$ is strongly VGG if and only if $L^{\otimes N}$ is globally generated for some $N\, \geq\, 1$.
\end{theorem}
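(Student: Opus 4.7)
For the implication ($\Leftarrow$), suppose $L^{\otimes N}$ is globally generated and choose finitely many sections $s_0, \ldots, s_r \in H^0(X, L^{\otimes N})$ with no common zero. The plan is to build a single finite cover of $X$ on which each $s_i$ becomes an $N$-th power, and to use the resulting sections to globally generate the pullback of $L$. For each $i$, let $\pi_i\colon Y_i \to X$ be the degree-$N$ cyclic cover realising $s_i$ as an $N$-th power: inside the total space of $L$ it is the subscheme cut out by $t^N = \pi^* s_i$, where $t$ denotes the tautological section of the pullback of $L$. By construction, $\pi_i^*L$ carries a section $\sigma_i$ with $\sigma_i^N = \pi_i^* s_i$. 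Let $Y$ be the normalisation of an irreducible component, dominating $X$, of the fibre product $Y_0 \times_X \cdots \times_X Y_r$. Then $Y$ is irreducible, normal, projective, of dimension $n$, and the natural morphism $f\colon Y \to X$ is finite and surjective. The pullbacks $\widetilde{\sigma}_i \in H^0(Y, f^*L)$ satisfy $\widetilde{\sigma}_i^N = f^* s_i$, and at any $y \in Y$ lying above $x \in X$ some $s_i(x)$ is nonzero, forcing $\widetilde{\sigma}_i(y) \ne 0$. Hence the $\widetilde{\sigma}_i$ have no common zero, $f^*L$ is globally generated, and $L$ is strongly VGG.

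For the implication ($\Rightarrow$), observe first that the only irreducible reduced closed subscheme of $X$ of dimension $n = \dim X$ is $X$ itself, so strong VGG furnishes a finite surjective morphism $f\colon Y \to X$ from an irreducible normal projective variety $Y$ of dimension $n$ such that $f^*L$ is globally generated. Let $d = \deg f$. The norm associated to the finite field extension $K(Y)/K(X)$ yields a map on sections
\[
\mathrm{Nm}\colon H^0(Y, f^*L) \longrightarrow H^0(X, L^{\otimes d}),
\]
with the property that $\mathrm{Nm}(s)(x) \ne 0$ precisely when $s$ is nonzero at every point of $f^{-1}(x)$. Fix $x \in X$. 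The fibre $f^{-1}(x)$ is finite, and the global generation of $f^*L$ implies that for each $y \in f^{-1}(x)$ the sections vanishing at $y$ form a proper linear subspace of $H^0(Y, f^*L)$. Since $k$ is infinite, a general $s$ lies outside all these subspaces simultaneously, so $\mathrm{Nm}(s)(x) \ne 0$. This shows $L^{\otimes d}$ has no base points and is therefore globally generated.

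The main technical care is needed in the forward direction. The cyclic covers $Y_i$ may fail to be integral (for example, in positive characteristic $p$ dividing $N$ they can be non-reduced); one deals with this by taking reductions, irreducible components, and normalisations at each step. The crucial point that makes the argument go through is that all the composed projections $Y \to Y_i \to X$ equal the single morphism $f$, so the various $\widetilde{\sigma}_i$ are canonically sections of the single line bundle $f^*L$, which lets the fibrewise base-point analysis be performed consistently. In the reverse direction the only substantive input is the standard norm map for a finite surjective morphism between integral normal varieties, together with its compatibility $\mathrm{Nm}(f^*L) \cong L^{\otimes d}$.
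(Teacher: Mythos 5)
Your proof is correct, but the forward direction takes a genuinely different route from the paper. For ($\Leftarrow$) you do essentially what the paper does: split each section $s_i$ of $L^{\otimes N}$ by an explicit cyclic cover $t^N=\pi^*s_i$ (the paper quotes \cite[Lemma 3.5]{BP2}/Mumford for this), pass to a dominating component of the fibre product, and normalize; the only divergence is that the paper first pulls back by a power of Frobenius to make $N$ prime to $p$, whereas you keep $N$ as is and absorb possible non-reducedness of the covers into the reduction/component/normalization step --- this is fine, since the identity $\widetilde{\sigma}_i^{\,N}=f^*s_i$ persists under restriction and is all the pointwise argument uses. For ($\Rightarrow$) the paper argues quite differently: it factors $f$ into its inseparable and separable parts, replaces the inseparable part by a Frobenius power and the separable part by its Galois closure, and then descends the $G$-invariant symmetrized section $\widehat{s}=(s_l)^{\otimes N/l}$, obtaining $N=|G|!$. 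Your norm-map argument is cleaner and characteristic-free: it needs no Galois closure, no Frobenius bookkeeping, and yields the sharper exponent $N=\deg f$. Its one substantive input, which you assert rather than justify, is that for a finite surjective (not necessarily flat) morphism of integral normal varieties the norm carries $H^0(Y,f^*L)$ to $H^0(X,L^{\otimes d})$ and satisfies the pointwise criterion; this does hold --- regularity of $\mathrm{Nm}(s)$ follows from integrality of $s$ over $\mathcal{O}_X$ and normality of $X$, and the fibrewise nonvanishing statement follows from $\Div(\mathrm{Nm}(s))=f_*\Div(s)$ (e.g.\ Fulton, \emph{Intersection Theory}, Prop.\ 1.4, applied on local trivializations of $L$), since zero loci of sections of line bundles on normal varieties are the supports of their divisors --- but a sentence or citation to this effect should be added, as flatness of $f$ is not available here.
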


\begin{proof}
First assume that $L$ is strongly VGG. Take a pair $(Y,\, f)$, where $Y$ is
an irreducible normal projective variety of dimension $n$, and $f\,:\,Y\,\longrightarrow\, X$ is a finite
dominant morphism, such that the pullback $f^*L$ is globally generated. If the characteristic of the base field
$k$ is zero then $f$ is also generically smooth. If $\Char(k)\,=\,p\,>\,0$ then we will first reduce to the
generically smooth situation.

Denote by $K$ the separable closure of the function field $k(X)$ in $k(Y)$, and
denote by $Z$ the normalization of $X$ in $K$. Then $f$ is the composition of a generically smooth and
purely inseparable morphisms
\begin{equation}
 Y\ \stackrel{f_i}{\longrightarrow}\ Z\ \stackrel{f_s}{\longrightarrow}\ X.
\end{equation}

As $f_i$ is purely inseparable, it factors through $F^r$ for some $r$, where $F\,:\,Z\,\longrightarrow\, Z$
is the absolute Frobenius morphism, i.e. $F^r=f_i\circ h$ for a morphism $h\,:\,Z\,\longrightarrow\, Y$.
The pullback of a globally generated line bundle is also globally generated, $h^*f^*L$ is globally generated.
In view of this, replacing $f_i$ by $F^r$
and $f_s$ by its Galois closure, we may assume that $f_s$ is a (ramified) Galois covering, $f_i=F^r$ and $Y=Z$.
Also, since Frobenius commutes with every morphism, we obtain the following diagram
\begin{equation}
\xymatrix{
Y\ar[r]^{f_s}\ar[d]_{F^r} & X\ar[d]^{F^r}\\
Y\ar[r]_{f_s} & X
 }
\end{equation}
for which we may assume that $f\,=\,F^r\circ f_s\,=\,f_s\circ F^r$.

Since $F^{r*}L\,=\,L^{\otimes p^r}$, it is enough to show that $(F^{r*}L)^{\otimes N}$ is globally generated
for some $N$. Hence we may assume that $f$ itself is a Galois cover.
The Galois group
$\text{Gal}(f)$ will be denoted by $G$. Denote
\begin{equation}\label{en}
N\ :=\ |G|! .
\end{equation}
We will show that the line bundle $L^{\otimes N}$, where $N$ is defined in \eqref{en}, is globally generated.

Take a point $x\,\in\, X$, and take any point $y\,\in\, f^{-1}(x)$. Since $f^*L$ is globally generated, there
exists a global section $s$ of $f^*L$ such that $s(y)\,\ne \,0$. Note that $G$ acts on $H^0(Y,\,f^*L)$. For
every $g\,\in \,G$, let $s_g$ be the global section $g\cdot s$ of $f^*L$. Fix an ordering on $G$. Let
$l\,=\, \#\{g\,\in\, G\,\,\big\vert\,\, s_g(y)\,\ne\, 0\}$. We define a section
$$s_{l}\ =\,\ \sum_{g_1< g_2< \ldots\ <g_l} s_{g_1}\otimes s_{g_2}\otimes\cdots \otimes s_{g_{l}}$$
of $(f^*L)^{\otimes l}$. Note that $s_{l}(y)\,\ne\, 0$ as only one term in the above sum is nonzero at $y$. Also, since $f^*L$ is a line bundle, $s_{g_1}\otimes s_{g_2}=s_{g_2}\otimes s_{g_1}$. Hence $s_{l}$ is fixed by the action of $G$. Now the section
$$\widehat{s}\ =\ (s_l)^{\otimes N/l}$$
of $(f^*L)^{\otimes N}$ is fixed by the action of $G$, and $\widehat{s}(y)\,\ne\, 0$. Consequently,
$\widehat s$ descends to a section $\widetilde s$ of $L^{\otimes N}$ and $\widetilde{s}(x)\,\ne\, 0$. From
this it follows that $L^{\otimes N}$ is base point free, and hence $L^{\otimes N}$ is globally generated.

To prove the converse, assume that $L^{\otimes N}$ is globally generated for some $N\,\ge \,1$. If $\Char(k)\,=
\,p\,>\,0$, set $N\,=\,p^rN'$, where $N'$ is coprime to $p$. Then $L^{\otimes N}\,=\,(F^{r*}L)^{\otimes N'}$
is globally generated. Also if $F^{r*}L$ is strongly VGG then so is $L$. Hence replacing $L$ by $F^{r*}L$, we
may assume $N$ is coprime to $p$ when $\Char(k)\,=\,p\,>\,0$.

Fix global sections $s_1,\,\cdots,\, s_m$ of $L^{\otimes N}$ such that
 $$\cap_{i=1}^m \Div(s_i)\ =\ \emptyset.$$
 By \cite[Lemma 3.5]{BP2} (see also ``Second proof'' of \cite[Theorem 2]{Mum}), there exist cyclic covers
$f_i\,:\,Z_i\,\longrightarrow\, X$ and sections $s'_i\,\in\, H^0(Z_i,\, f_i^*L)$ such that $f_i(\Div(s'_i))
\,=\,\Div(s_i)$ for every $1\,\le\, i \,\le\, m$. Let $f\,:\,Z\,\longrightarrow\, X$ be the normalization of
a dominating component of the fiber product $Z_1\times_X Z_2\times_X\ldots\times_X Z_m$. Then the pullbacks
$\widehat{s}_i$ of $s'_i$ to $Z$ are sections of $f^*L$ with the property that
 $$\cap_{i=1}^m f(\Div(\widehat{s}_i)\ =\ \emptyset .$$
 Since $f$ is a proper map, this implies that $\cap_{i=1}^m\Div(\widehat{s}_i) \,=\,\emptyset$. So
$f^*L$ is base-point free, and hence $f^*L$ is globally generated.
\end{proof}

\section{Proof of Theorem \ref{main}}\label{sec3}

First, statement (1) will be proved.
Take a strongly VGG vector bundle $E$ on $X$. Let $Y$ be an irreducible normal projective variety with
$\dim Y\,=\, \dim X\,=\, n$ and $f\,:\,Y\,\longrightarrow\, X$ a dominant finite morphism,
such that
$f^*E$ is globally generated. Consider the projective bundles $P_X\, :=\, \Proj_X(E)$ and $P_Y\,:=\,
\Proj_Y(f^*E)$. Note that the base
change of $f$ to $P_X\,\longrightarrow\, X$ is a finite morphism
\begin{equation}\label{epf}
Pf\ :\ P_Y\ \longrightarrow\ P_X.
\end{equation}
Since $f^*E$ is globally generated, and we have a surjection $\pi^*_Y (f^*E)\,\longrightarrow\, \cO_{P_Y}(1)$,
where $\pi_Y\, :\, P_Y \, \longrightarrow\, Y$ is the natural projection,
it follows that $\cO_{P_Y}(1)$ is also globally generated. Since $\cO_{P_Y}(1)\,=\,(Pf)^*\cO_{P_X}(1)$,
where $Pf$ is the map in \eqref{epf},
we obtain that $\cO_{P_X}(1)$ is strongly VGG. Now by Theorem \ref{main-lb}, the line bundle $\cO_{P_X}(N)$
is globally generated for some $N\,\ge\, 1$. This proves the statement (1).

To prove statement (2), let $E$ be a VGG vector bundle on $X$. Take an integer $N\, \geq\, 1$ such that
$\cO_{P_X}(N)$ is globally generated, where $P_X\,=\,\Proj_X(E)$ is the projective bundle. Let
$g\,:\,C\,\longrightarrow\, X$ be a finite morphism with $C$ being a smooth projective curve.
We have $$P_C\ :=\ \Proj_C(g^*E)\ =\ C\times_X P_X.$$

The line bundle ${\cO}_{P_C}(N)$ is globally generated because it is the pullback of the
globally generated bundle $\cO_{P_X}(N)$. Note that $\pi_*\cO_{P_C}(N) \,=\,\Sym^N(g^*E)$, where
$\pi\,:\,P_C\,\longrightarrow\, C$ is the natural projection.
Since ${\cO}_{P_C}(N)$ is globally generated, it follows that
$\Sym^N(g^*E)$ is globally generated. As $\Sym^N(g^*E)$ is globally generated, we have
$\mu_{min}(\Sym^N(g^*E))\, \ge \, 0$, and hence $\mu_{min}(g^*E)\,\ge\, 0$. Now using the criterion for the curve case \cite[Theorem 1.1]{BP2}, we obtain that $g^*E$ is virtually globally generated. Hence $E$ is curve VGG. 
This completes the proof of Theorem \ref{main}.

\begin{question}\label{q1}
Does VGG implies strongly VGG?
\end{question}

Note that by Theorem \ref{main-lb}, Question \eqref{q1} has a positive answer for line bundles. The following
proposition shows that the same is true for direct sums of line bundles.

\begin{proposition}\label{p1}
Let $E$ be a direct sum of line bundles on a normal projective variety. If $E$ is VGG then it is strongly VGG.
\end{proposition}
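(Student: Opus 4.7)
The plan is to exploit the direct-sum structure to reduce $E$ to its summands and then apply the line bundle case (Theorem \ref{main-lb}), combining the covers via a fiber product.

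Write $E \,=\, \bigoplus_{i=1}^r L_i$. The natural quotients $E \,\twoheadrightarrow\, L_i$ give tautological sections $\sigma_i\,:\,X\,\longrightarrow\, \Proj(E)$ of the structure map $\pi\,:\,\Proj(E)\,\longrightarrow\, X$, with the defining property that $\sigma_i^*\cO_{\Proj(E)}(1)\,\cong\, L_i$. Since $E$ is VGG, there exists $N\,\ge\, 1$ such that $\cO_{\Proj(E)}(N)$ is globally generated. Pulling this back along $\sigma_i$ yields that $L_i^{\otimes N}\,\cong\, \sigma_i^*\cO_{\Proj(E)}(N)$ is globally generated on $X$ for every $1\,\le\, i\,\le\, r$.

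By Theorem \ref{main-lb} applied to each $L_i$, we obtain finite dominant morphisms $f_i\,:\,Y_i\,\longrightarrow\, X$, with $Y_i$ an irreducible normal projective variety of dimension $n\,=\,\dim X$, such that $f_i^*L_i$ is globally generated on $Y_i$. To trivialize all $L_i$ simultaneously, form the fiber product $Y_1\times_X \cdots \times_X Y_r$; pick an irreducible component dominating $X$ (which exists because each $f_i$ is dominant and finite, hence surjective), and let $Y$ be its normalization. The resulting morphism $f\,:\,Y\,\longrightarrow\, X$ is finite and dominant with $Y$ irreducible normal projective of dimension $n$, and for each $i$ it factors through $Y_i$ via a morphism $p_i\,:\,Y\,\longrightarrow\, Y_i$. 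Hence $f^*L_i\,=\,p_i^*(f_i^*L_i)$ is globally generated for every $i$, and therefore $f^*E\,=\,\bigoplus_{i=1}^r f^*L_i$ is a direct sum of globally generated line bundles, which is globally generated. This exhibits $E$ as strongly VGG.

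There is no serious obstacle in this argument; the one point requiring minor care is the step going from ``$\cO_{\Proj(E)}(N)$ globally generated'' to ``$L_i^{\otimes N}$ globally generated,'' for which the tautological sections $\sigma_i$ coming from the direct-sum decomposition are exactly what is needed (rather than trying to deduce global generation of the whole of $\Sym^N E$, which is a stronger and unnecessary statement). Note that the direct-sum structure is essential here: it is precisely what provides the $r$ sections $\sigma_i$ along which $\cO_{\Proj(E)}(1)$ restricts to a line bundle on $X$, and hence is what allows us to reduce to Theorem \ref{main-lb}.
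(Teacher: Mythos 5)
Your argument is correct and is essentially the paper's own proof: use the sections of $\Proj(E)\to X$ coming from the quotients $E\twoheadrightarrow L_i$ to deduce that each $L_i^{\otimes N}$ is globally generated, invoke Theorem \ref{main-lb} for each $L_i$, and combine the resulting covers via the normalization of a dominating component of the fiber product. No discrepancies to report.
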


\begin{proof}
Let $E\,=\,\bigoplus_{i=1} ^r L_i$. Fix any $i$ between 1 and $r$. The projection $E\,\longrightarrow\, L_i$ gives 
a section $g:X\,\longrightarrow\, P_X:=\Proj_X(E)$. Moreover, $g^*\cO_{P_X}(1)\cong L_r$. Since $E$ is VGG, 
$\cO_{P_X}(N)$ is globally generated for some $N$. Hence $L_i^{\otimes N}=g^*\cO_{P_X}(N)$ is globally 
generated. Theorem \ref{main-lb} ensures that there exists a finite dominant morphism $f_i\,:\,
Y_i\,\longrightarrow\, X$ such that $f_i^*L_i$ is globally generated. Let $f:Y\,\longrightarrow\, X$ be the 
normalization of a dominating component of the fiber product $Y_1\times_X\ldots\times_X Y_r$. Then $f^*L_i$ 
is globally generated for all $1\,\le\, i \,\le \,r$. Consequently, $f^*E$ is globally generated.
\end{proof}

\section{Examples}

\begin{example}\label{lex1}

We will describe an example showing that curve VGG does not imply VGG. We recall a construction
of Mumford (see \cite[page 326, Example 2]{Kl}, \cite[Ch.~I, Ex. (10.6)]{Ha}).

Let $M$ be a smooth complex projective curve of genus $g$, with $g\, \geq\, 2$. The fundamental group
$\pi_1(M,\, x_0)$ is isomorphic to the quotient of the free group generated by $a_1,\, a_2,\, \cdots,\, a_g,\,
b_1,\, b_2,\,\, \cdots, b_g$ by the normal subgroup generated by the element $\prod_{i=1}^g a_ib_ia^{-1}_i
b^{-1}_i$. So the free group generated by $a_1,\, a_2,\, \cdots, a_g$ is a quotient of $\pi_1(M,\, x_0)$.
There are $g$ elements $A_1,\, \cdots,\, A_g$ of $\text{SU}(2)$ such that subgroup of $\text{SU}(2)$
generated by $A_1,\, \cdots,\, A_g$ is dense in $\text{SU}(2)$. Fix a homomorphism
$$
\rho\ :\ \pi_1(M,\, x_0)\ \longrightarrow\ \text{SU}(2)
$$
such that $\overline{{\rho}(\pi_1(M,\, x_0))} \,=\, \text{SU}(2)$. For example, take $\rho$ such that
$\rho(a_i)\, =\, A_i$ and $\rho(b_i)\, =\, I$ for all $1\, \leq\, i\, \leq\, g$. This $\rho$ gives a holomorphic
vector bundle $V$ of rank 2 on $M$ equipped with a flat connection $\nabla$. This vector bundle $E$ is stable
\cite{NS}.

Let $\Phi\, :\, Y\, \longrightarrow\, M$ be a nonconstant morphism from a smooth complex projective curve $Y$.
Then the corresponding homomorphism of fundamental groups $\Phi_*\, :\, \pi_1(Y,\, y_0)\, \longrightarrow\,
\pi_1(M,\, x_0)$, where $y_0\, \in\, \Phi^{-1}(x_0)$, has the property that its image is a finite index
subgroup of $\pi_1(M,\, x_0)$. So the connection $\Phi^*\nabla$ has a dense monodromy, and hence $\Phi^*E$ is stable
\cite{NS}. This implies that the restriction of ${\mathcal O}_{{\mathbb P}(E)}(1)$
to every curve in ${\mathbb P}(E)$ is ample. Hence $E$ is ample. Any ample vector bundle on
$M$ is curve VGG \cite[p.~46, Theorem 3.6]{BP2}. So, $E$ is curve VGG. For any $d\, \geq\,1$, consider the
flat connection on vector bundle $\text{Sym}^d(E)$ induced by the flat connection $\nabla$ on $E$.
The closure of the monodromy of this induced connection on $\text{Sym}^d(E)$ is the subgroup
$\text{SU}(2)\, \subset\, \text{SU}(d+1)$ given by the natural action of $\text{SU}(2)$ on $\text{Sym}^d({\mathbb C}^2)$.
This implies that the vector bundle $\text{Sym}^d(E)$ is stable \cite{NS}. Hence
we have
$$
H^0(M,\, \text{Sym}^d(E))\ =\ 0.
$$
So $H^0({\mathbb P}(E),\, {\mathcal O}_{{\mathbb P}(E)}(d))\,=\, H^0(M,\, \text{Sym}^d(E))\, =\, 0$. Thus
$E$ is not VGG.
\end{example}

\begin{example}\label{lex2}

The construction in Example \ref{lex1} can be generalized in a straightforward way. Fix two integers
$m\, \geq\, 1$ and $n\,\geq\, m+1$. We will construct a smooth projective variety $X$ of dimension $n$, and a
line bundle $L$ on $X$, such that $L$ is $m$--VGG but it is not $(m+1)$--VGG.

Take a smooth complex projective curve $M$ of genus $g$, with $g\, \geq\, 2$. Fix a homomorphism
$$
\rho\ :\ \pi_1(M,\, x_0)\ \longrightarrow\ \text{SU}(m+1)
$$
such that $\overline{{\rho}(\pi_1(M,\, x_0))} \,=\, \text{SU}(m+1)$. This $\rho$ gives a holomorphic
vector bundle $E\, \longrightarrow\, M$ of rank $m+1$ equipped with a flat connection $\nabla$. The vector bundle
$E$ is stable \cite{NS}. Consider the projective bundle ${\mathbb P}(E)\, \longrightarrow\,
M$ associated to $E$. Fix a smooth projective variety $Z$ of dimension $n-m-1$. Set
$$
X\ :=\ {\mathbb P}(E)\times Z.
$$
Let
$$
\varphi\ :\ X \ =\ {\mathbb P}(E)\times Z \ \longrightarrow \ {\mathbb P}(E)
$$
be the natural projection. Consider the line bundle
$$
L \ :=\ \varphi^* {\mathcal O}_{{\mathbb P}(E)}(1) \ \longrightarrow\ X.
$$
This $L$ is $m$--VGG but it is not $(m+1)$--VGG. Note that $\dim Z \,=\, n$.
\end{example}

\begin{example}
We will give an example of a vector bundle swept by curves which is not curve VGG.

Take a projective variety $X$ with a very ample line bundle $L$ on it. Take a projective curve $Y$ with
a line bundle $L'$ on it of negative degree. Let $p_X$ (respectively, $p_Y$) be the natural projection
of $X\times Y$ to $X$ (respectively, $Y$). Consider the line bundle
$$
(p^*_X L)\otimes (p^*_Y L')\ \longrightarrow X\times Y.
$$
It is swept by curves because its restriction to $X\times \{y\}$ is very ample for every $y\, \in\, Y$.
On the other hand, the restriction of $(p^*_X L)\otimes (p^*_Y L')$ to $\{x\}\times Y$ is of negative
degree for all $x\, \in\, X$. Hence $(p^*_X L)\otimes (p^*_Y L')$ is not curve VGG.
\end{example}

\end{document}